\newcommand*{\house}[1]{%
	\mathord{%
		\mathpalette\@house{#1}%
	}%
}
\newcommand*{\@house}[2]{%
	\dimen@=\fontdimen8 %
	\ifx#1\scriptscriptstyle\scriptscriptfont
	\else\ifx#1\scriptstyle\scriptfont
	\else\textfont\fi\fi
	3 %
	\sbox0{%
		$#1%
		\vrule width\dimen@\relax
		\overline{%
			\kern2\dimen@
			\begingroup 
			#2%
			\endgroup
			\kern2\dimen@
		}%
		\vrule width\dimen@\relax
		\mathsurround=1.5\dimen@ 
		$%
	}%
	\ht0=\dimexpr\ht0-\dimen@\relax
	\dp0=\dimexpr\dp0+2\dimen@\relax
	\vbox{%
		\kern\dimen@ 
		\copy0 %
	}%
}
\theoremstyle{plain}
\newtheorem{theorem}{Theorem}
\newtheorem{lemma}[theorem]{Lemma}
\newtheorem{cor}[theorem]{Corollary}
\newtheorem*{lemma*}{Lemma}
\theoremstyle{definition}
\newtheorem{definition}[theorem]{Definition}
\newcommand{\R}{\mathbb{R}}
\newcommand{\N}{\mathbb{N}}
\newcommand{\Z}{\mathbb{Z}}
\newcommand{\Q}{\mathbb{Q}}
\newcommand\Nr{\mathrm{N}}
\newcommand{\co}{\mathcal{O}}
\newcommand*\cO{\mathcal{O}}
\newcommand*\ok{\cO_K}
\DeclareMathOperator\Tr{Tr}
\DeclarePairedDelimiterX\set[2]\lbrace\rbrace{\,#1\mathclose{}:\mathopen{}#2\,}
\title[On Kitaoka's conjecture and lifting problem for universal quadratic forms]{On Kitaoka's conjecture and lifting problem\\ for universal quadratic forms}
\author{V\' \i t\v ezslav Kala}
\address{Charles University, Faculty of Mathematics and Physics, Department of Algebra, Sokolov\-sk\' a 83, 18600 Praha~8, Czech Republic}
\email{{vitezslav.kala@matfyz.cuni.cz}}
\author{Pavlo Yatsyna}
\address{Charles University, Faculty of Mathematics and Physics, Department of Algebra, Sokolov\-sk\' a 83, 18600 Praha~8, Czech Republic} 
\address{Aalto University, Department of Mathematics and Systems Analysis, P.O. Box 11100, FI-00076, Finland}
\email{{pavlo.yatsyna@aalto.fi}}
\thanks{Both authors were supported by the project PRIMUS/20/SCI/002 from Charles University.
	V.K. was supported by Czech Science Foundation (GA\v CR) grant 21-00420M and Charles University Research Centre program UNCE/SCI/022. P.Y. was supported by the Academy of Finland (grants 336005 and 351271, PI C. Hollanti), and by MATINE, Ministry of Defence of Finland (grant 2500M-0147, PI C. Hollanti).}
\keywords{universal quadratic form, totally real number field, extension of scalars, indecomposable algebraic integer}
\subjclass[2010]{11E12, 11E20, 11E25, 11H06, 11R04}
\date{\today}
\begin{document}

	\begin{abstract}
		For a totally positive definite quadratic form over the ring of integers of a totally real number field $K$, we show that there are only finitely many totally real field extensions of $K$ of a fixed degree over which the form is universal (namely, those that have a short basis in a suitable sense). Along the way we give a general construction of a universal form of rank bounded by $D(\log D)^{d-1}$, where $d$ is the degree of $K$ over $\Q$ and $D$ is its discriminant.
		Furthermore, for any fixed degree we prove (weak) Kitaoka's conjecture that there are only finitely many totally real number fields with a universal ternary quadratic form.  
	\end{abstract}

	\maketitle

	\section{Introduction}

	Let $F$ be a totally real number field with the ring of integers $\co_F$. A classical question asks when an algebraic integer $\alpha\in\co_F$ 
	can be written as the sum of squares of elements of $\co_F$; an obvious necessary condition is that $\alpha$ is totally positive (i.e., all its conjugates are positive). This condition is also sufficient in the basic case $F=\Q$ by the four square theorem, and when $F=\Q(\sqrt{5})$. 
	However, Siegel \cite{Si3} proved that these are the only such totally real number fields, motivating the study of two natural generalizations.
	
	The first one asks how many squares are required, provided that $\alpha$ is indeed representable as the sum of squares.
	This leads to the Pythagoras number, a constant well-studied also in other settings (see, e.g., \cite{Le}). 
	Yet, in the case of the ring of integers of a totally real number field, all that is known in general is that the Pythagoras number is finite \cite{Sc2} and bounded by the degree of the field \cite{KY1}, but can grow arbitrarily large \cite{Sc2} (in the non-totally real case, the Pythagoras number $\leq 5$ \cite{Pe2}; for some small degree cases see \cite{Pe, Ti, KRS}). 
	Furthermore, Siegel \cite{Si2} proved that for each number field $F$ there exists $m\in \N$ such that all totally positive integers divisible by $m$ can be represented as the sum of squares. 
	
	The second generalization is to universal forms over $F$, i.e., quadratic forms that represent all the totally positive elements of $\co_F$. 
	The smallest possible rank of a universal form is three, and this can happen only in number fields of even degree  \cite{EK}. Even for them, Kitaoka formulated his influential conjecture that there are only finitely many number fields $F$ admitting a ternary universal form. 
	The only known examples are in real quadratic fields \cite{CKR} and Kitaoka's conjecture remains unproven, even though it seems that universal forms typically must have large ranks \cite{BK2, Ka, KKP, KT, Ya}.
	
	Indecomposable algebraic integers turned out to be one of the key tools in the recent advances on this topic.
	In this short paper, we first prove in Theorem \ref{thm:ind_bound} that each indecomposable has norm smaller or equal to the discriminant of $F$, significantly extending previous results in the quadratic and cubic cases \cite{DS, KT, TV}, 
	as well as improving the previous general bound \cite{Br}, which is typically much worse than ours, as it depends on the regulator. 
	Our result is also a substantial step towards answering \cite[Problem 53]{Na}. 
	
	Theorem \ref{thm:ind_bound} directly implies a construction of a universal quadratic form -- giving the first completely general result in the train of thought started by B.~M. Kim \cite{Ki2}. The following theorem is more precisely formulated and proved as Theorem \ref{thm:univ}.
	
	\begin{theorem}\label{th:0}
		Let $F$ be a totally real number field of degree $d=[F:\Q]$ and discriminant $\Delta$. Then there is a universal quadratic form over $\co_F$ of rank $\ll \Delta(\log\Delta)^{d-1}$.
	\end{theorem}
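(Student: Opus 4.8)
The plan is to produce an explicit universal form as an orthogonal sum of rescaled sums of squares, with one block for each indecomposable up to squares of units, and then to bound the number of blocks by geometry of numbers using Theorem~\ref{thm:ind_bound}. I would start from two structural facts. First, every totally positive $\alpha\in\co_F$ is a sum of indecomposable totally positive integers: induct on $\Tr_{F/\Q}(\alpha)\in\N$, which splits as a sum of strictly smaller positive traces whenever $\alpha$ decomposes. Second, since multiplication by a square $u^2$ of a unit preserves both total positivity and indecomposability (if $u^2\delta=\beta+\gamma$ with $\beta,\gamma\succ0$ then $\delta=u^{-2}\beta+u^{-2}\gamma$ is a decomposition, as $u^{-2}\succ0$), the indecomposables fall into finitely many orbits under $(\co_F^\times)^2$; let $\delta_1,\dots,\delta_m$ be orbit representatives.

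For each representative I take the block $Q_\delta(y_1,\dots,y_P)=\delta(y_1^2+\dots+y_P^2)$, where $P=P(\co_F)$ is the Pythagoras number. Given an arbitrary totally positive $\alpha$, write $\alpha=\sum_i\eta_i$ with each $\eta_i$ indecomposable and $\eta_i=u_i^2\delta_{r(i)}$; grouping by orbit gives $\alpha=\sum_{r}\delta_r c_r$ with $c_r=\sum_{i:r(i)=r}u_i^2$ a sum of squares of elements of $\co_F$ (possibly $0$). By the definition of the Pythagoras number each $c_r$ is a sum of at most $P$ squares, so $\delta_r c_r$ is a value of $Q_{\delta_r}$, and hence the totally positive definite diagonal form $Q=\bigoplus_{r=1}^{m}Q_{\delta_r}$, of rank $mP$, represents $\alpha$. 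As $\alpha$ was arbitrary, $Q$ is universal; it remains to estimate $m$.

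By Theorem~\ref{thm:ind_bound} every indecomposable has norm at most $\Delta$, so $m$ is bounded by the number of totally positive integers of norm $\le\Delta$ taken modulo $(\co_F^\times)^2$. Since $[\,\co_F^{\times,+}:(\co_F^\times)^2\,]\le 2^d$, this is $\ll_d M'$, where $M'$ counts totally positive integers of norm $\le\Delta$ modulo totally positive units. I would estimate $M'$ by geometry of numbers: in the logarithmic embedding a fundamental domain for the unit action meeting $\{1\le \mathrm N\le\Delta\}$ has volume $\asymp_d R\,\Delta$, where $R$ is the regulator (the covolume of the log-unit lattice), and dividing by the covolume $\sqrt\Delta$ of $\co_F$ gives $M'\ll_d R\sqrt\Delta$; equivalently one counts principal ideals of norm $\le\Delta$ via Dedekind's asymptotics. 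Inserting the classical upper bound $R\ll_d\sqrt\Delta(\log\Delta)^{d-1}$ (from the bound on the residue of $\zeta_F$ at $s=1$, or a direct covering argument for the regulator) then yields $m\ll_d\Delta(\log\Delta)^{d-1}$. Combined with $P\ll_d 1$ \cite{KY1}, the rank $mP$ is $\ll_d\Delta(\log\Delta)^{d-1}$, as claimed.

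I expect the counting of indecomposables up to units to be the main obstacle. The region to be counted is extremely elongated (its norm coordinate sweeps $[1,\Delta]$ while the shape directions are confined to a single unit fundamental domain), so a naive volume-to-lattice-point comparison is not automatically valid; one must either set up that comparison with a uniform control of the boundary contribution — cleanest through the principal-ideal count — or argue by a covering. The genuinely delicate point is to keep every implied constant uniform, depending only on $d$ and never on $F$ itself, while invoking the regulator and residue bounds. By contrast, the geometric construction of $Q$ and the verification of universality are routine once the decomposition into indecomposables and the finiteness of $P$ are in hand.
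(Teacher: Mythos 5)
Your construction and its verification of universality are correct, and they in fact differ mildly from the paper's: you allot one block $\delta_r(y_1^2+\cdots+y_P^2)$ to each class of \emph{indecomposables} and represent $\alpha$ by decomposing it into indecomposables and grouping by class, whereas the paper takes one variable $\alpha x_\alpha^2$ for each class of \emph{all} totally positive integers of norm $\le\Delta$ together with a single block $y_1^2+\cdots+y_P^2$, proving universality by repeatedly splitting off squares via Theorem \ref{thm:ind_bound} until the remainder has norm $\le\Delta$. This difference is immaterial: your rank is $mP$ and the paper's is $\#\mathcal S+P$, and since $P\ll_d 1$ and $m\le\#\mathcal S$, both reduce the theorem to the same question, namely bounding the number of classes of totally positive integers of norm $\le\Delta$ up to squares of units, uniformly in $F$.

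The genuine gap is in that counting step, and it is exactly the step the paper's proof is engineered to avoid. You reduce to the claim that the number $M'$ of totally positive integers of norm $\le\Delta$ modulo units is $\ll_d R\sqrt\Delta$ with a constant depending only on $d$, to be obtained from a volume computation or from ``Dedekind's asymptotics.'' But the classical count of principal ideals of norm $\le X$ has the shape $\kappa_0 X+O(c_F X^{1-1/d})$ with $c_F$ depending on $F$, and at the point $X=\Delta$ that you need, the known error terms are typically \emph{larger} than the main term $\asymp_d R\sqrt\Delta$; the paper makes precisely this observation in the remark following Theorem \ref{thm:univ}, which is why it settles for the weaker but uniform bound $\Delta(\log\Delta)^{d-1}$ rather than anything involving $R$. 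Nor does a covering argument rescue this naively: the fundamental domain for the unit action is cusped, and boxes in Minkowski space can contain far more points of $\co_F$ than $\mathrm{vol}/\sqrt\Delta$ predicts (for instance, a box of volume well below $\sqrt\Delta$ around the orbit of $1$ contains roughly $(\log T)^{d-1}/R$ units, which is unbounded when $R$ is small), and controlling this multiplicity leads you back to counting ideals, i.e., to the very estimate being proved. The paper's resolution is elementary and completely uniform: bound the number of classes by $2^d$ times the number of \emph{all} ideals of norm $\le\Delta$, observe via Euler products that the ideal-counting coefficients of $\zeta_F$ satisfy $a_n\le b_n$, where $\zeta(s)^d=\sum b_n n^{-s}$ (so $b_n=\tau_d(n)$ is the $d$-fold divisor function), and use $\sum_{n\le\Delta}\tau_d(n)\ll_d\Delta(\log\Delta)^{d-1}$. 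No regulator, no residue or class-number bound, no boundary estimates. If you replace your two-step estimate ($M'\ll_d R\sqrt\Delta$ followed by $R\ll_d\sqrt\Delta(\log\Delta)^{d-1}$) by this single coefficient comparison, your proof closes; as written, the ``main obstacle'' you flag is a genuine missing step, not a technicality.
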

	
	In the case when the number field $F$ is monogenic (i.e., has integral power basis), contains no proper subfields, and has units of all signatures, 
	there is also a lower bound \cite[Theorem~1.4]{Ya} on the rank of universal quadratic forms that depends on $\Delta$. In particular, for such real quadratic number fields \cite[Theorem~5.4]{Ya}, and for the simplest cubic fields \cite[Theorem~1.1]{KT}, the rank of the universal quadratic form is $\gg \Delta^{1/2}$ or $\gg \Delta^{1/4}$, depending on whether the quadratic form is classical or not, respectively. 
	
	\
	
	Then we move on to the main goal of this note, namely, to proving several finiteness results concerning the (non-)universality of certain quadratic forms (and, slightly more generally, $\co_F$-lattices).
	
	\begin{theorem}\label{thm:1}
		Let $F$ be a totally real number field, $L$ an $\co_F$-lattice, and $d,m\in\N$. 
		There are at most finitely many totally real number fields $K\supset F$ of degree $d=[K:\Q]$ such that $L\otimes\ok$
		represents all elements of $m\ok^+$.
	\end{theorem}
	
	This theorem includes, as a significant special case, a new result on the sum of squares, for one can take $F=\Q$ and $L=\Z^r$ equipped with the quadratic form $Q=x_1^2+\dots+x_r^2$. 
	It also extends the previous results on the \textit{lifting problem} (i.e., whether a form can be universal over a larger number field, see \cite{KY1} and \emph{potentially universal quadratic forms} in \cite{XZ}), and partly resolves an open question formulated in \cite{KY1}:
	As Corollary \ref{cor:z-forms} we show that a given quadratic form is universal only over finitely many totally real number fields of degree $d$. 
	
	We further apply these results to prove a weak version of Kitaoka's conjecture:
	
	\begin{theorem}\label{thm:2}
		For each $d\in\N$, there are only finitely many totally real number fields $K$ of degree $d=[K:\Q]$ over which there is a ternary universal $\ok$-lattice.
	\end{theorem}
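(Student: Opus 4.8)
The plan is to derive Theorem~\ref{thm:2} from the finiteness statement of Theorem~\ref{thm:1}. The essential tension is that Theorem~\ref{thm:1} fixes in advance both the base field $F$ and the lattice $L$, while here the ternary $\ok$-lattice is permitted to vary with $K$; so the core task is to show that universality of a varying ternary lattice still forces the kind of \emph{short basis} that makes the finiteness mechanism of Theorem~\ref{thm:1} apply.

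I would begin with the standard reductions. A ternary universal lattice forces $d$ to be even \cite{EK}, so we assume this. Since $1\in\ok^+$, any universal lattice $L$ has a vector $e$ with $Q(e)=1$; as $Q(e)$ is a unit, $\ok e$ splits off orthogonally away from the primes above $2$, and up to an index bounded in terms of $d$ (which can be absorbed into the modulus $m$ of Theorem~\ref{thm:1}) we may write $L\cong\langle 1\rangle\perp N$ with $N$ a binary $\ok$-lattice, so that $\langle 1\rangle\perp N$ remains universal. The point of this escalation is to expose a binary piece $N$ whose representation behaviour I can try to pin down.

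The heart of the argument, and the step I expect to be hardest, is to manufacture a fixed base. I would take $F=\Q$ and argue that universality forces $\ok$ to possess a short basis in the sense underlying Theorem~\ref{thm:1}: every totally positive integer of bounded trace must be represented, and by Theorem~\ref{thm:ind_bound} the indecomposables occurring among these have norm at most $\Delta$, which rigidly constrains a minimal $\Z$-basis of $\ok$ once these values are forced to lie in the value set of $\ok^3$ under $Q$, a lattice of rank $3d$ over $\Z$. The genuine obstacle is that the ternary lattice need \emph{not} be defined over $\Q$ -- its Gram matrix has entries in the varying ring $\ok$ -- so it cannot literally be written as $L_0\otimes\ok$ for a $\Z$-lattice $L_0$, and Theorem~\ref{thm:1} cannot be invoked verbatim. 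I would therefore descend not the lattice but a consequence of its universality. Since for rank $3$ the comparison between the number of totally positive integers of trace $\le T$ and the number of values of the rank-$3d$ trace form is inconclusive by volume alone, I would instead exploit the rigidity supplied by Theorem~\ref{thm:ind_bound} together with the binary summand $N$ to force a short $\Z$-basis of $\ok$, hence a bound on $\Delta$ depending only on $d$. Once $\Delta$ is bounded, Hermite--Minkowski leaves only finitely many such $K$, proving the theorem; making this final geometry-of-numbers step unconditional -- rather than merely volume-heuristic -- is where the real work lies.
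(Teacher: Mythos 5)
Your proposal correctly identifies the central obstruction---Theorem~\ref{thm:1} requires a \emph{fixed} base field $F$ and a \emph{fixed} lattice $L$, while the ternary lattice here varies with $K$---but it does not overcome it. The step you yourself flag as ``where the real work lies'' is exactly the missing proof: you assert that universality, via Theorem~\ref{thm:ind_bound} and the binary summand $N$, ``forces a short $\Z$-basis of $\ok$, hence a bound on $\Delta$ depending only on $d$,'' but no mechanism is given, and Theorem~\ref{thm:ind_bound} (indecomposables have norm at most $\Delta$) is a bound that \emph{grows} with $\Delta$, so by itself it constrains nothing as $\Delta\to\infty$; indeed it plays no role in the paper's proof of this theorem. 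The preliminary reduction $L\cong\langle 1\rangle\perp N$ is also problematic: the paper's lattices need not be classical, so a vector of norm $1$ need not split off orthogonally, and ``up to bounded index'' cannot be absorbed into the modulus $m$ of Theorem~\ref{thm:1}, because that theorem fixes the lattice while $N$ still varies with $K$. In any case nothing in the rest of the proposal actually uses $N$.

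What the paper supplies, and what your proposal lacks, is a mechanism for extracting \emph{fixed} data from the varying lattice. The paper uses generalized 290-sets: for each field $k$ there is a finite set $S_k\subset\co_k^+$ such that any $\co_k$-lattice representing $S_k$ is universal over $k$. Choosing vectors $v_s\in L$ with $Q(v_s)=s$ for $s\in S_k$ (with $k\subsetneq K$ a subfield), the Cauchy--Schwarz inequality bounds the traces of the squares of the Gram matrix entries $B(v_s,v_t)$, so the Gram matrix $M$ ranges over a finite set \emph{independent of $K$}, and hence so does the field $k(M)$ generated by its entries. An induction over chains of subfields then shows that either $K$ lies in a finite list, or some proper subfield $F\subsetneq K$ from a finite list carries a ternary universal $\co_F$-lattice $\ell$ (only finitely many such $\ell$, by Earnest's theorem) with $\ell\otimes\ok\subset L$. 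Lemma~\ref{lemma:prep} converts this containment into ``$\ell\otimes\ok$ represents all of $m\ok^+$'' for an $m$ independent of $K$, and only then is Theorem~\ref{thm:1} invoked---with base field $F$ drawn from the finite list, not $F=\Q$. Without a substitute for the 290-set/Gram-matrix finiteness and the sublattice index lemma, there is no route from ``some ternary $\ok$-lattice is universal'' to the hypotheses of Theorem~\ref{thm:1}, so the proposal as written has a genuine gap at its core step.
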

	
	For \textit{classical} lattices, this theorem was previously proved by B. M. Kim in an unpublished manuscript \cite{Ki3}.
	
	We begin the article by introducing notation and basic tools in the next section. Section \ref{sec:indeco} considers indecomposables and contains the proofs of Theorems \ref{thm:ind_bound} and \ref{thm:univ}. Theorem \ref{thm:1} is then proved in Section \ref{sec:2} as Theorem \ref{th:lifting}. There we also cover the cases of representations of $m\co_F^+$ by the sums of squares (Corollary \ref{cor:sums of squares}) and the lifting problem for positive definite quadratic forms over $\Z$ (Corollary \ref{cor:z-forms}).  
	Finally, Theorem \ref{thm:2} is proved in the last section as Theorem \ref{th:kitaoka}. There we use the existence of  
 ``universality criterion sets'' (as introduced in \cite{EKK}), which was a folklore result \cite{Km}, whose full proof appeared in the work of Chan and Oh \cite{CO}.
	
	\section*{Acknowledgments}
	We are grateful to Byeong Moon Kim for sharing his manuscript \cite{Ki3} with us, to Giacomo Cherubini and Dayoon Park for several useful discussions and comments, 
	and to Jakub Kr\' asensk\' y for suggesting the specific formulation of Theorem \ref{thm:univ} and for a number of helpful corrections. We are also thankful to the anonymous referee for several very useful suggestions.

	\section{Preliminaries}\label{sec:prel}
	
	Let $F$ be a totally real number field of degree $d$ over $\Q$ and let $\co_F$ denote its ring of integers. Let $\sigma_1,\sigma_2,\dots,\sigma_d:K\hookrightarrow\R$ be the distinct real embeddings of $F$ and let $\sigma=(\sigma_1,\sigma_2,\dots,\sigma_d):F\hookrightarrow\R^d$ be the corresponding embedding of $F$ into the Minkowski space.
	
	The \textit{norm} and \textit{trace} are $\Nr_{F/\Q}, \Tr_{F/\Q}:F\rightarrow \R$, $\Nr_{F/\Q}(\alpha)=\sigma_1(\alpha)\cdots\sigma_d(\alpha)$ and $\Tr_{F/\Q}(\alpha)=\sigma_1(\alpha)+\dots+\sigma_d(\alpha)$.
	
	As a height function on $F$, we will primarily work with the \textit{house} (also called \textit{the maximum modulus of conjugates}), defined as
	$\house{\alpha}_F=\house{\alpha}= \max_i(|\sigma_i(\alpha)|)$.  
	For $v=(v_1,\dots,v_n)^t\in F^n$, we further let $\house{v}_F=\house{v}=\max_j\house{v_j}_F$. We have $\house{\alpha}\ge 1$ for all $\alpha \in \co_F\setminus\{0\}$. 
	
	An element $\alpha\in F$ is \textit{totally positive} if $\sigma_i(\alpha)>0$ for all $i$. The set of all totally positive elements of $F$ is denoted $F^+$. For $\alpha,\beta\in F$, $\alpha$ is \textit{totally greater than} $\beta$ (denoted $\alpha\succ\beta$) if $\alpha-\beta\in F^+$.
	If $E$ is a subset of $F$, then $E^+=E\cap F^+$. 
	
	An element $\alpha\in \co_F^+$ is \textit{indecomposable} if there does not exist $\beta\in \co_F^+$ such that $\alpha\succ \beta.$ 
	
	For a positive integer $m$, let $\sum^m\co_F^{(2)}=\{\sum^m\alpha_i^2:\alpha_i\in\co_F\}$; also $\sum^\infty\co_F^{(2)}=\bigcup_m\sum^m\co_F^{(2)}$. We say that $n=\mathcal P(\co_F)$ is the \textit{Pythagoras number} of $\co_F$ if it is the smallest positive integer (or $\infty$) such that $\sum^n\co^{(2)}_F=\sum^{\infty}\co_F^{(2)}$.
	
	\medskip
	
	We follow the lattice-related terminologies and notations from \cite{O1}. 
	Specifically, let $V$ be $r$-ary \textit{quadratic space} over $F$ with its symmetric \textit{bilinear form} $B:V\times V\longrightarrow F$ and the corresponding \textit{quadratic form} $Q$, i.e., $B(v,v)=Q(v)$. 
	The \textit{Gram matrix} of vectors $v_1,\dots,v_k\in V$ is the $k\times k$ symmetric matrix $(B(v_i,v_j)).$
	
	A \textit{(quadratic)} $\co_F$-\textit{lattice} $L$ on $V$ is a finitely generated $\co_F$-submodule of $V$ such that  
	$FL=V$, which we view equipped with the restrictions of $B$ and $Q$; we often denote this by saying that $(L,Q)$ is an $\co_F$-lattice. 
	The \textit{rank} of $L$ is $r$.
	
	The \textit{scale} of $L$ is $\mathfrak{s}L=\{B(v,w):v,w \in L\},$ while the $\co_F$-module generated by $Q(L)=\{Q(v):v \in L\}$, denoted by $\mathfrak{n}L$, is called the \textit{norm} of $L.$ Throughout the work we assume that $\mathfrak{n}L\subset \co_F$ (i.e., $Q(v)\in\co_F$ for all $v\in L$); then $\mathfrak{s}L\subset \frac 12\co_F$ and $2B(v,w)\in\co_F$ for all $v,w\in L$.
	We say that the scale is \textit{integral} if $\mathfrak{s}L\subset \co_F$, in this case we say that the lattice  is \textit{classical}. 
	By \cite[81:3]{O1} there is a basis $v_1,\dots,v_r$ of $V$ and fractional ideals $\mathfrak{a}_i\subset F$ such that $L=\mathfrak{a}_1v_1+\cdots+\mathfrak{a}_rv_r$. 
	The \textit{volume} of $L$ is $\mathfrak{v}L=\mathfrak{a}^2_1\cdots\mathfrak{a}^2_r\det(B(v_i,v_j))$. 
	
	The quadratic form $Q$ (or, the lattice $L$) is \textit{totally positive definite} if $Q(v)\succ 0$ for all non-zero $v \in V$. 
	In that case we have the \textit{Cauchy--Schwarz inequality} $Q(v)Q(w)\succeq B(v,w)^2$ for all $v,w\in V$ (that easily follows from the usual Cauchy--Schwarz inequality for quadratic forms over $\R$).
	We say that $L$ (or $Q$) \textit{represents} an algebraic integer $\alpha$  if there exists $v \in L$ such that $Q(v)=\alpha$. We say that a totally positive lattice is \textit{universal} (over $\co_F$ or over $F$) if it represents all the elements of $\co_F^+$.

	Given a field extension $K\supset F$ and an $\co_F$-lattice $(L,Q)$, we have the \textit{tensor product} $\co_K$-lattice $L\otimes \co_K$
	defined as $L\otimes\co_K=\co_K\mathfrak{a}_1v_1+\cdots+\co_K\mathfrak{a}_rv_r$ equipped with the natural extensions of $B$ and $Q$, e.g., 
	if 	$x=\sum x_iv_i\in L\otimes\co_K,$ then $Q(x)=x^tMx$ where we identify $x$ with the column vector $(x_1,\dots,x_r)^t$ and $M=(B(v_i,v_j))$ is the Gram matrix. If we moreover identify $V=F^r$, then $L\otimes \co_K\subset K^r$.

	For a quadratic form $Q:\co_F^r\rightarrow \co_F$, we apply all the preceding terminology when it applies to the free $\co_F$-lattice $(\co_F^r,Q)$.
	
	\medskip
	
	\noindent
	\textbf{Convention.} 	
	Whenever we talk about an \textit{$\co_F$-lattice} throughout the paper, we always mean a totally positive definite quadratic $\co_F$-lattice satisfying $\mathfrak nL\subset \co_F$.
	
	\medskip
	
	We shall also use the common asymptotic notations: 
	If $f(x), g(x)$ are two positive real functions, then $f\ll g$ (and $g\gg f$) if there is a constant $C>0$ such that $f(x)<Cg(x)$ for all $x$ (that lie in the domains of $f,g$),  
	and $f\asymp g$ if $f\ll g$ and $g\ll f$.
	
	\medskip
	
	Let us now begin with the following lemma:
	
	\begin{lemma}\label{lem:RR} Let $K\supset F$ be totally real number fields and
		$(L,Q)$ an $\co_F$-lattice. There is $C=C_{L,F}\in\R^+$  such that for all $v\in L\otimes \co_K$ we have $\house{v}_K^2\leq C\house{Q(v)}_K$. 
	\end{lemma}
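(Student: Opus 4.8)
The plan is to reduce the statement to a fact about a fixed finite collection of real positive definite quadratic forms, exploiting that every real embedding of $K$ restricts to one of the $d$ real embeddings $\sigma_1,\dots,\sigma_d$ of $F$. First I would identify $V=F^r$ via the chosen basis, so that $Q(v)=v^tMv$ for the Gram matrix $M=(B(v_i,v_j))$, whose entries lie in $F$. Since $(L,Q)$ is totally positive definite, for each $i$ the symmetric real matrix $\sigma_i(M)$, obtained by applying $\sigma_i$ to the entries of $M$, is positive definite; crucially there are only $d$ such matrices, all determined by $L,Q,F$ and \emph{not} by $K$.

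Next I would record the elementary eigenvalue comparison for real positive definite forms: if $\lambda_i>0$ is the smallest eigenvalue of $\sigma_i(M)$, then $x^t\sigma_i(M)x\ge\lambda_i\sum_j x_j^2$ for all $x\in\R^r$. Setting $C\eqdef(\min_i\lambda_i)^{-1}$ therefore produces a constant depending only on $L,Q,F$.

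The key step ties the two fields together. Let $\tau$ be any real embedding of $K$ and let $\sigma_i=\tau|_F$ be its restriction to $F$. Because the entries of $M$ lie in $F$, applying $\tau$ to $Q(v)=v^tMv$ gives $\tau(Q(v))=\tau(v)^t\sigma_i(M)\tau(v)$, where $\tau(v)=(\tau(v_1),\dots,\tau(v_r))^t\in\R^r$. The eigenvalue bound yields $\tau(Q(v))\ge\lambda_i\sum_j\tau(v_j)^2\ge\lambda_i\max_j\tau(v_j)^2$, while $\tau(Q(v))\le|\tau(Q(v))|\le\house{Q(v)}_K$ straight from the definition of the house. Hence $\max_j\tau(v_j)^2\le C\house{Q(v)}_K$, and taking the maximum over all real embeddings $\tau$ of $K$ and all coordinates $j$ gives $\house{v}_K^2\le C\house{Q(v)}_K$.

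I expect the only genuine subtlety to be the $K$-independence of $C$, which is exactly the content of the lemma, and it is handled by the restriction observation in the key step: however large $K$ grows, each of its real embeddings restricts to one of the finitely many $\sigma_i$, so the only real matrices that ever occur are $\sigma_1(M),\dots,\sigma_d(M)$, whose minimal eigenvalues are fixed once $F$ and $Q$ are chosen. The remaining input, positive definiteness of each $\sigma_i(M)$, is a routine consequence of total positive definiteness being preserved under extension of scalars.
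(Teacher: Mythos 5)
Your proof is correct and takes essentially the same approach as the paper: both pass to the Gram matrix $M$, apply the Rayleigh--Ritz eigenvalue bound to the finitely many positive definite real matrices $\sigma_i(M)$, and get $K$-independence of $C=\bigl(\min_i\lambda_1(\sigma_i(M))\bigr)^{-1}$ from the observation that every real embedding of $K$ restricts to one of the $\sigma_i$ on $F$. The only cosmetic difference is bookkeeping: the paper enumerates the embeddings of $K$ as extensions $\sigma_{ij}$ of each $\sigma_i$, whereas you start from an arbitrary embedding $\tau$ of $K$ and restrict it to $F$.
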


Note that the constant $C=C_{L,F}$ above depends on $L, Q, F$, and the choice of pseudo-basis $L=\mathfrak{a}_1u_1+\cdots+\mathfrak{a}_ru_r$, but \textbf{not} on $K$.

	\begin{proof}
		This follows from Rayleigh--Ritz Theorem (Rayleigh quotient) \cite[Theorem~4.2.2]{HJ}, that says: \textit{If $M$ is a symmetric matrix with entries in $\R$ and $\lambda_1(M)$ is its smallest eigenvalue, then}
		\begin{equation}\label{eq:lemma}
		\lambda_1(M)\sum v_k^2 \le v^tMv
		\end{equation}
		\textit{for any vector $v=(v_1,\ldots,v_d)^t\in \R^d$.} 
		Let $L=\mathfrak{a}_1u_1+\cdots+\mathfrak{a}_ru_r$ and let $M=(B(u_i,u_j))$ be the corresponding Gram matrix.
		Define $M_i=\sigma_i(M)$ for each embeddings of $F$ in $\R$. In particular, $M_i$ is a symmetric matrix with coefficients in $F$. 
		For each embedding $\sigma_i:F\hookrightarrow\R$, let $\sigma_{ij}:K\hookrightarrow \R$ be all its extensions.
		
		It now suffices to consider the inequality \eqref{eq:lemma} 
		with $C=\frac{1}{\lambda}$, where $\lambda=\min_i \lambda_1(M_i)$ (note that $\lambda>0$, for $L$ is totally positive definite, which implies that each matrix $M_i$ is positive definite). That is,
		\begin{align*}
		\house{Q(v)}_K= &\max_{i, j} \sigma_{ij}(v^tMv)	=  \max_{i,j}\left(  (\sigma_{ij}v)^tM_i(\sigma_{ij}v)\right) 
		\ge \max_{i,j} \left( \lambda_1(M_i)\left(\sum (\sigma_{ij}v_k)^2\right)\right) \\
		\ge & \frac 1C\max_{i,j} \sigma_{ij}\left(\sum v_k^2\right)
		\ge \frac 1C\house{v}_K^2.\qedhere
		\end{align*}
	\end{proof}

\

	\section{Indecomposables}\label{sec:indeco}
	
	Let us start by proving a general bound on the norm of indecomposables.

	\begin{theorem}\label{thm:ind_bound}
		Let $K$ be a totally real number field with discriminant $\Delta.$ 
		For every element $\alpha \in \ok^+$ with $\Nr_{K/\Q}(\alpha)>\Delta$ there is $\beta\in\ok$ such that $\alpha\succ\beta^2$. In particular, no such element $\alpha$ is indecomposable. 
	\end{theorem}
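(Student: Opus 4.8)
The plan is to recast the conclusion as a lattice-point problem and settle it by Minkowski's convex body theorem. Write $n=[K:\Q]$ and let $\sigma_1,\dots,\sigma_n\colon K\hookrightarrow\R$ be the real embeddings. Producing $\beta\in\ok$ with $\alpha\succ\beta^2$ is exactly finding $\beta$ with $\sigma_i(\beta)^2<\sigma_i(\alpha)$ for every $i$, i.e.
\[
	\abs{\sigma_i(\beta)}<\sqrt{\sigma_i(\alpha)}\qquad(1\le i\le n).
\]
If such a $\beta$ is moreover nonzero, then $\beta^2\in\ok^+$, so $\alpha$ is decomposable; thus it suffices to find a \emph{nonzero} $\beta\in\ok$ lying, under the Minkowski embedding, in the open box $B=\prod_{i=1}^n\bigl(-\sqrt{\sigma_i(\alpha)},\,\sqrt{\sigma_i(\alpha)}\bigr)$.

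First I would record the relevant volumes. Under $\sigma=(\sigma_1,\dots,\sigma_n)$ the image $\sigma(\ok)\subset\R^n$ is a full lattice of covolume $\sqrt{\Delta}$ (here $\Delta>0$ since $K$ is totally real). The set $B$ is convex and symmetric about the origin, and since $\alpha$ is totally positive each factor is a genuine interval, so
\[
	\vol(B)=\prod_{i=1}^n 2\sqrt{\sigma_i(\alpha)}=2^n\sqrt{\textstyle\prod_i\sigma_i(\alpha)}=2^n\sqrt{\Nr_{K/\Q}(\alpha)}.
\]

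Now the hypothesis enters: $\Nr_{K/\Q}(\alpha)>\Delta$ gives $\sqrt{\Nr_{K/\Q}(\alpha)}>\sqrt{\Delta}$, whence
\[
	\vol(B)=2^n\sqrt{\Nr_{K/\Q}(\alpha)}>2^n\sqrt{\Delta}=2^n\,\mathrm{covol}\bigl(\sigma(\ok)\bigr).
\]
By Minkowski's convex body theorem the open symmetric convex set $B$ then contains a nonzero point of $\sigma(\ok)$, say $\sigma(\beta)$ with $0\ne\beta\in\ok$. By construction $\abs{\sigma_i(\beta)}<\sqrt{\sigma_i(\alpha)}$ for all $i$, i.e. $\alpha\succ\beta^2$ with $\beta^2\in\ok^+$, which yields both assertions.

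The argument is short, so the only point I expect to need care is matching the strict inequality $\Nr_{K/\Q}(\alpha)>\Delta$ with a strict volume excess: because I want the strict inequalities $\abs{\sigma_i(\beta)}<\sqrt{\sigma_i(\alpha)}$ (to keep $\alpha-\beta^2$ totally positive rather than merely totally nonnegative), I would invoke the open-body form of Minkowski's theorem, in which a strictly larger-than-$2^n$ covolume forces a nonzero lattice point inside the open set. The computation of the covolume $\sqrt\Delta$ and of the identity $\vol(B)=2^n\sqrt{\Nr_{K/\Q}(\alpha)}$ are the standard normalizations of the geometry of numbers and carry no real difficulty.
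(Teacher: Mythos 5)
Your proof is correct and follows essentially the same route as the paper: both reduce the statement to finding a nonzero point of $\sigma(\ok)$ in the box $\prod_i\bigl(-\sqrt{\sigma_i(\alpha)},\sqrt{\sigma_i(\alpha)}\bigr)$ and apply Minkowski's convex body theorem, using $\Nr_{K/\Q}(\alpha)>\Delta$ to get the required volume excess over $2^n\sqrt{\Delta}$. The only cosmetic difference is in handling strictness: the paper shrinks the box by a small $\varepsilon>0$ so as to quote Minkowski for a slightly smaller body, while you invoke the open-body form with strict volume inequality directly --- both are valid.
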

	\begin{proof}
		Let $K$ be of degree $d$ over $\Q$. 
		In the Minkowski space associated to $K$, consider the box defined by $|x_i|\leq \sqrt{\sigma_i(\alpha)}-\varepsilon$, 
		where $\sigma_i$ are the embeddings of $K$ into $\R$ and $\varepsilon>0$ is small enough that \[\prod^d_{i=1}(\sqrt{\sigma_i(\alpha)}-\varepsilon)>\sqrt{\Delta}.\]
		This is possible because $\prod^d_{i=1}\sqrt{\sigma_i(\alpha)}=\sqrt{\Nr_{K/\Q}(\alpha)}>\sqrt{\Delta}$.
		Thus the volume of the box is bigger than $2^d\sqrt{\Delta}$. By Minkowski theorem (e.g., see Theorem III in Chapter III in \cite{Ca}) there exists a non-zero lattice point in this box, which corresponds to an algebraic integer $\beta\in\ok$. We have that $\sqrt{\sigma_i(\alpha)}>\sigma_i(\beta),$ and thus $\sigma_i(\alpha)>\sigma_i(\beta^2)$ for all $i$. Therefore $\alpha\succ \beta^2,$ as was required to show. 
	\end{proof}
	
	As an important corollary, we get the following general construction of a universal quadratic form.
	
	\begin{theorem}\label{thm:univ}
		Let $K$ be a totally real number field of degree $d=[K:\Q]$ whose discriminant is $\Delta$ and Pythagoras number of the ring of integers $\ok$ is $\mathcal P(\ok)=P$. Fix a set of representatives $\mathcal S$ for classes of elements $\alpha\in\ok^+$, $\Nr_{K/\Q}(\alpha)\leq \Delta,$ up to multiplication by squares of units in $\ok$. Then the diagonal quadratic form
		$$Q=\sum_{\alpha\in\mathcal S}\alpha x_{\alpha}^2+y_1^2+\dots+y_{P}^2$$
		is universal and has rank $\#\mathcal S+P\ll \Delta(\log\Delta)^{d-1}$ (where the implied constant depends only on $d$).
	\end{theorem}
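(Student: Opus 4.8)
The plan is to prove universality first and then bound the rank, treating these as two separate tasks. For universality, I want to show that every $\gamma\in\ok^+$ is represented by $Q$. The strategy is a descent argument on the norm: given $\gamma$, if $\Nr_{K/\Q}(\gamma)\leq\Delta$, then $\gamma$ is (up to multiplication by the square of a unit) one of the representatives $\alpha\in\mathcal S$, and since $\gamma=\eta^2\alpha$ for some unit $\eta$, the term $\alpha x_\alpha^2$ represents $\gamma$ by setting $x_\alpha=\eta$. If instead $\Nr_{K/\Q}(\gamma)>\Delta$, I invoke Theorem~\ref{thm:ind_bound} to find $\beta\in\ok$ with $\gamma\succ\beta^2$, so $\gamma-\beta^2\in\ok^+$ is again totally positive. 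The remaining point is that $\gamma-\beta^2$ should have strictly smaller norm than $\gamma$, allowing induction; I would need to check that the construction of $\beta$ in Theorem~\ref{thm:ind_bound} actually forces $\Nr_{K/\Q}(\gamma-\beta^2)<\Nr_{K/\Q}(\gamma)$, or else argue that repeatedly subtracting squares must terminate. Once the norm is driven down to $\leq\Delta$, the base case applies, so $\gamma-(\text{sum of squares})\in\mathcal S$ up to a unit square, and the extra variables $y_1^2+\dots+y_P^2$ absorb the sum of squares we peeled off, using that $\mathcal P(\ok)=P$ guarantees any sum of squares (of which the peeled-off $\beta^2$'s form one) can be rewritten with at most $P$ squares.

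The main obstacle I expect is making the descent genuinely terminate and correctly accounting for the squares. Subtracting $\beta^2$ from $\gamma$ need not decrease the norm in an obvious monotone way, since norm is multiplicative rather than additive; the cleaner route is probably to iterate and observe that at each step we produce a totally positive integer strictly totally below the previous one, together with a well-foundedness argument (e.g., the trace or each embedding strictly decreases), ensuring finitely many steps. I would then collect all the subtracted squares $\beta_1^2,\dots,\beta_k^2$ into a single sum of squares; by definition of the Pythagoras number, $\beta_1^2+\dots+\beta_k^2$ lies in $\sum^P\ok^{(2)}$, hence is representable by $y_1^2+\dots+y_P^2$. Combining this with the single $\alpha x_\alpha^2$ term that handles the final element of $\mathcal S$ (adjusted by a unit square) gives $Q(v)=\gamma$.

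For the rank bound, I would compute $\#\mathcal S+P$. Since $P=\mathcal P(\ok)\leq d$ by the result of \cite{KY1} cited in the introduction, the $P$ contributes only $O(d)=O(1)$ and is negligible. The dominant term is $\#\mathcal S$, the number of classes of $\alpha\in\ok^+$ with $\Nr_{K/\Q}(\alpha)\leq\Delta$ modulo squares of units. The plan is to bound this by counting totally positive integers of bounded norm up to the action of the (finite-index) subgroup of squares of units, which amounts to counting lattice points of bounded norm in a fundamental domain for this unit action in the Minkowski space. A standard Minkowski-geometry or Dirichlet-unit-theorem estimate should give $\#\mathcal S\ll\Delta(\log\Delta)^{d-1}$: the factor $\Delta$ comes from the volume (norm $\leq\Delta$) scaled against the covolume of $\ok$, while the $(\log\Delta)^{d-1}$ factor arises from the logarithmic volume of the fundamental domain for the rank-$(d-1)$ unit group, reflecting the regulator's growth.

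The step I anticipate as hardest is this counting estimate $\#\mathcal S\ll\Delta(\log\Delta)^{d-1}$, because one must carefully bound the number of totally positive integers of norm at most $\Delta$ that are inequivalent under multiplication by unit squares, and the $(\log\Delta)^{d-1}$ factor requires controlling the geometry of the fundamental domain of the unit group in relation to the discriminant. This is precisely where the interplay between the regulator, the discriminant, and the degree enters, and where uniformity in the implied constant (depending only on $d$) must be verified.
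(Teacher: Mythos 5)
Your universality argument is essentially the paper's: descend using Theorem~\ref{thm:ind_bound} until the norm drops to $\leq\Delta$, write the remaining element as $\alpha x^2$ with $\alpha\in\mathcal S$ and $x$ a unit, and absorb the peeled-off squares into $y_1^2+\dots+y_P^2$ by the definition of $\mathcal P(\ok)=P$. Your worry about termination dissolves on inspection: since $\gamma\succ\beta^2$ with $\beta\neq 0$, every conjugate of $\gamma-\beta^2$ is positive and strictly smaller than the corresponding conjugate of $\gamma$, so in fact $\Nr_{K/\Q}(\gamma-\beta^2)<\Nr_{K/\Q}(\gamma)$; as these norms are positive rational integers, the descent stops after finitely many steps. (A small misreading: \cite[Corollary 3.3]{KY1} gives $P\leq f(d)$ for some function of $d$, not $P\leq d$; this is harmless, since you only need $P$ bounded in terms of $d$.)

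The genuine gap is in the estimate $\#\mathcal S\ll\Delta(\log\Delta)^{d-1}$, precisely the step you flagged as hardest. Counting totally positive integers of norm $\leq X$ modulo (squares of) units by lattice points in a fundamental domain for the unit action produces an asymptotic whose constants depend on $K$: the main term carries the residue of $\zeta_K$ (hence the regulator and class number), and the error term depends on the geometry of the fundamental domain, which is long and thin near the norm-zero hypersurfaces and is not controlled uniformly in $K$. The theorem, however, demands an implied constant depending only on $d$, evaluated at the single scale $X=\Delta$ --- and the paper remarks immediately after Theorem~\ref{thm:univ} that even finer versions of the $\asymp X$ asymptotics fail at this scale, because for $X=\Delta$ the error term is often larger than the main term. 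So a ``standard Minkowski-geometry or Dirichlet-unit-theorem estimate'' does not deliver the claim; uniformity in $K$ is the whole difficulty, and your proposal acknowledges it without supplying the idea that resolves it. The paper sidesteps the issue entirely: $\#\mathcal S$ is at most $2^d$ times the number of principal ideals of norm $\leq\Delta$, and writing $\zeta_K(s)=\sum_{n\geq 1} a_n n^{-s}$ and $\zeta(s)^d=\sum_{n\geq 1} b_n n^{-s}$, one has $a_n\leq b_n$ for every $n$ (at most $d$ prime ideals lie above any rational prime, so each Euler factor of $\zeta_K$ is dominated coefficientwise by the corresponding factor of $\zeta^d$). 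Hence $\#\mathcal S\leq 2^d\sum_{n\leq\Delta}b_n\ll\Delta(\log\Delta)^{d-1}$ by the classical divisor-sum (Tauberian) estimate, with a constant depending only on $d$ --- uniformity is automatic because $b_n$ does not depend on $K$ at all. This comparison with $\zeta(s)^d$ is the missing key step in your proposed proof.
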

	
	\begin{proof}
		Let $\gamma\in\ok^+$. If $\Nr_{K/\Q}(\gamma)>\Delta$, then we can repeatedly use Theorem \ref{thm:ind_bound} to find elements $\gamma_0\in\ok^+,\beta_1,\dots,\beta_t\in\ok$ such that $\gamma=\gamma_0+\beta_1^2+\dots+\beta_t^2$ and $\Nr_{K/\Q}(\gamma_0)\leq \Delta$.
		Then $\gamma_0=\alpha x^2$ for some $\alpha\in\mathcal S$ and $x\in\ok^\times$, and the sum of squares $\beta_1^2+\dots+\beta_t^2$ is represented as $y_1^2+\dots+y_{P}^2$ by the definition of the Pythagoras number $P$.		
		
		Now it remains to estimate $\#\mathcal S+P$: 
		The size of $\mathcal S$ is at most $2^d$-times the number of principal ideals of norm $\leq \Delta$ (for in $\mathcal S$, we are considering elements up to \textit{squares} of units). Counting {all} ideals $I$ of norm $N(I)\leq\Delta$, it is quite easy to see that their number is  $\ll\Delta(\log\Delta)^{d-1}$: 	 
		
		Let $a_n$ be the number of ideals in $\co_K$ of norm $n$ and let $b_n$ be defined by $\zeta(s)^d=\sum_{n\geq 1} b_nn^{-s}$ (where $\zeta(s)$ is the Riemann zeta-function). 
		For each rational prime $p$, there are at most $d$ prime ideals that divide $p$ (and whose norm is a power of $p$), and so by comparing the Euler products of $\zeta_K(s)=\sum_{n\geq 1} a_nn^{-s}$ and $\zeta(s)^d$, we see that $a_n\leq b_n$ for all $n$.
		Thus it suffices to obtain an upper bound for $\sum_{1\leq n\leq\Delta} b_n$, which by the Tauberian theorem is easily seen to be $\ll \Delta(\log\Delta)^{d-1}$ as $\Delta\rightarrow\infty$ (where the implied constant depends only on $d$), as we wanted. (For the required background in analytic number theory, see, e.g., \cite[Chapter~7 and Appendix~II]{Na}).

As for the Pythagoras number $P$, in \cite[Corollary~3.3]{KY1} we proved that $P$ is bounded from above 
by a bound that depends only on the degree $d=[K:\Q]$. Specifically, the proof of \cite[Corollary~3.3]{KY1} established that $P\leq g(d)$, where $g(d)$ is the \textit{$g$-invariant}, i.e.,
the smallest rational integer
such that any quadratic form with $\Z$-coefficients of rank $d$ that is represented by the sum of any number of squares is represented by the sum of $g(d)$ squares.
Recently Beli, Chan, Icaza, and Liu \cite[Theorem~1.1]{BC+} showed an upper bound for the $g$-invariant that is exponential in $\sqrt d$, i.e., $g(d)\le c_1 \exp(c_2\sqrt{d})$. In any case, as our implied constant depends on $d$, we have $P\ll 1$.
	\end{proof}
	
	Note that the number of ideals in $\co_K$ of norm $\leq X$ grows as $\asymp X$ as $X\rightarrow\infty$ (and an analogue of this is known even for \textit{principal} ideals). But here the constants do depend on $K$, and in fact, even using finer versions of this asymptotics, it seems that for $X=\Delta$, the error term is often larger than the main term. Therefore in Theorem \ref{thm:univ} we had to use the weaker bound $\ll \Delta(\log\Delta)^{d-1}$.
	
	\

	\section{Weak Lifting Problem}\label{sec:2}
	
	We are now ready to prove our first main theorem concerning the general lifting problem.

	\begin{theorem}\label{th:lifting}
		Let $F$ be a totally real number field, $L$ an $\co_F$-lattice, and $d,m\in\N$. 
		There are at most finitely many totally real number fields $K\supset F$ of degree $d=[K:\Q]$ such that $L\otimes\ok$
		represents all elements of $m\ok^+$. 
	\end{theorem}	
	
	\begin{proof}
		Let $C$ be the constant (for $F$ and $L$) from Lemma \ref{lem:RR} and take a totally real number field $K\supset F$ of degree $d=[K:\Q]$. Let $X>1$ be  such that
		\begin{itemize}
			\item $X>\house{\alpha_i}_F=\house{\alpha_i}_K$ for all elements $\alpha_i$ in a fixed integral basis of $F$, and
			\item $X>3Cm$. 
		\end{itemize}
		
		Consider the subfield $E\subset K$ generated (as a field) by all the elements $\alpha\in \ok$ such that $\house{\alpha}_K<X$. By our choice of $X$, we have $F\subset E$.
		There are only finitely many algebraic integers of bounded degree and house (in particular, of degree $\leq d$ and house $<X$), and so they generate only finitely many possible number fields $E$. In particular,  
		there are (at most) finitely many possible fields $K$ for which $E=K$ can happen. Excluding these $K$s, we can assume that $E$ is a proper subfield of $K$. 
		
		Let $Y\geq X$ be the  
		minimum of $\house{\alpha}=\house\alpha_K$ for $\alpha\in \ok\setminus\co_E$ (the minimum exists, as the set of algebraic integers of bounded degree and $\house{\alpha}$ is finite). Let $\alpha\in \ok\setminus\co_E$ be such that $\house{\alpha}=Y$, and let $k\in\Z$ be the smallest rational integer such that $k+\alpha\succ 0$. We have $\house{k+\alpha}< 2\house\alpha+1\leq 3\house{\alpha}$.
		
		If $L\otimes\ok$ represents all elements from $m\ok^+$, 
		then there exists $v\in L\otimes\ok\subset K^r$ such that $Q(v)=m(k+\alpha)$. 
		If $v=(v_j)\in E^r\subset K^r$, then $Q(v)\in E$ (as $(L,Q)$ is an $\co_F$-lattice and $F\subset E$), and so also $\alpha\in E$, a contradiction. Thus there is $j$ such that $v_j\not\in E$, and so $\house{v}\geq \house{v_j}\geq Y$.
		
		By Lemma \ref{lem:RR} we then have \[Y^2\leq \house{v}^2\leq C\house{Q(v)}=Cm\house{k+\alpha}<3Cm\house{\alpha}=3YCm.\] 
		Thus $X\leq Y<3Cm$, which is a contradiction with our choice of $X$.	
	\end{proof}
	
	Note that one could also consider the natural extension of Theorem \ref{th:lifting} to representations of $\beta\co_K^+$ for fixed $\beta\in\co_F^+$. The corresponding statement follows immediately from the theorem, as $\beta\co_K^+\subset m\co_K^+$ for $m=N_{F/\Q}(\beta)$.
	
	\begin{cor}\label{cor:sums of squares}
		For each $m,d\in\N$, there are only finitely many totally real number fields $K$ of degree $d=[K:\Q]$ such that all elements in $m\ok^+$ are sums of squares.	
	\end{cor}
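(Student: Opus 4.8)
The plan is to reduce the condition \emph{``every element of $m\ok^+$ is a sum of (arbitrarily many) squares''} to representation by a single \emph{fixed} $\Z$-lattice, and then to apply Theorem \ref{th:lifting} with $F=\Q$. The one extra ingredient needed is a bound on the Pythagoras number that is uniform across all fields of the given degree.

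First I would invoke the uniform bound on the Pythagoras number: by \cite[Corollary 3.3]{KY1} there is a constant $P=P(d)$, depending only on $d$, such that $\mathcal P(\ok)\leq P$ for every totally real number field $K$ of degree $d$. By the definition of the Pythagoras number this gives $\sum^\infty\ok^{(2)}=\sum^P\ok^{(2)}$, so every element of $\ok$ that is a sum of squares is already a sum of at most $P$ squares, i.e., is represented by the quadratic form $Q=x_1^2+\dots+x_P^2$. Now set $F=\Q$ and let $L=(\Z^P,Q)$ be the sum-of-squares $\Z$-lattice of rank $P$; crucially this is a \emph{single} fixed lattice, since $P$ depends only on $d$ (and not on the individual field $K$). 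For any totally real $K$ of degree $d$ we have $L\otimes\ok=(\ok^P,Q)$, and this lattice represents $\alpha\in\ok$ precisely when $\alpha$ is a sum of at most $P$ squares in $\ok$.

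Combining these two observations: if every element of $m\ok^+$ is a sum of squares, then by the Pythagoras bound every such element is a sum of at most $P$ squares, and hence $L\otimes\ok$ represents all of $m\ok^+$. Applying Theorem \ref{th:lifting} to this fixed $F=\Q$, the fixed lattice $L=(\Z^P,Q)$, and the given $m,d$ then yields that only finitely many such $K$ can occur, as required. I expect the only substantive point — rather than a genuine obstacle — to be the appeal to the \emph{uniformity} of the Pythagoras number bound over all degree-$d$ fields: without it the rank $P$ of the relevant lattice would vary with $K$, and Theorem \ref{th:lifting} (which fixes $L$ once and for all) could not be invoked. Once this uniformity is recorded, the corollary is an immediate consequence.
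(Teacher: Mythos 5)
Your proposal is correct and follows essentially the same route as the paper: both invoke the degree-uniform Pythagoras bound $\mathcal P(\ok)\leq f(d)$ from \cite[Corollary 3.3]{KY1} to reduce ``sum of squares'' to representation by the single fixed sum-of-squares lattice of rank $f(d)$ over $F=\Q$, and then apply Theorem \ref{th:lifting}. Your explicit emphasis on why the uniformity of the bound is indispensable is exactly the point the paper's proof relies on implicitly.
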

	
	\begin{proof}
		The Pythagoras number of the ring of integers is bounded in terms of degree of the field extension \cite[Corollary~3.3]{KY1}, i.e., \textit{there is a function $g(d)$ such that $\mathcal P(\co_F)\leq g(d)$ whenever $[F:\Q]=d$}. Thus, it suffices to consider the sum of a bounded number of squares $Q=x_1^2+\dots+x_{g(d)}^2$. Then the corollary is immediate from the previous theorem
		applied to the free $\co_F$-lattice $(\co_F^{g(d)},Q)$. 
		(See the end of the proof of Theorem \ref{thm:univ} for more information on $g(d)$.)
	\end{proof}
	 
 We get the subsequent strong addendum to Theorems 1.1 and 1.2 in \cite{KY1}:
	\begin{cor}\label{cor:z-forms}
		For each $d, r\in\N$, there are only finitely many totally real number fields $K$ of degree $d=[K:\Q]$ such that there is a positive definite quadratic form over $\Z$ of rank $r$ that is universal over $K$.
	\end{cor}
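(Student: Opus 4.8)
For a single fixed $\Z$-form the statement is immediate from Theorem~\ref{th:lifting} (take $F=\Q$, $L=(\Z^r,Q)$, $m=1$). The content of the corollary is to make this uniform over the \emph{infinitely many} $\Z$-forms of rank $r$; note that one cannot hope to reduce to finitely many forms, since e.g.\ $x_1^2+\dots+x_4^2+Nx_5^2$ is universal over $\Q(\sqrt5)$ for every $N$. The plan is therefore to inspect the proof of Theorem~\ref{th:lifting} and isolate its only form-dependent ingredient, namely the constant $C$ of Lemma~\ref{lem:RR}; for a free lattice $(\Z^r,Q)$ with Gram matrix $M$ this is $C=1/\lambda_1(M)$, the reciprocal of the least eigenvalue of $M$.

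The decisive point is that $C$ can be bounded by a constant $C_0(r)$ depending only on $r$, simultaneously for all rank-$r$ $\Z$-forms. Although $\lambda_1(M)$ is not invariant under $\operatorname{GL}_r(\Z)$, Lemma~\ref{lem:RR} may be applied with respect to \emph{any} basis of the lattice, and I would choose a reduced one (Minkowski- or LLL-reduced). Writing $M=B^tB$ gives $\lambda_1(M)=\sigma_{\min}(B)^2$, and the Gram--Schmidt factorization $B=B^\ast U$ with $U$ unit upper-triangular and $\abs{U_{ij}}\le\tfrac12$ yields $\sigma_{\min}(B)\ge\sigma_{\min}(U)\cdot\min_i\lVert b_i^\ast\rVert$. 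Here $\sigma_{\min}(U)\ge c(r)>0$ because such $U$ range over a compact set of matrices of determinant $1$, while for a reduced basis the Gram--Schmidt norms satisfy $\lVert b_i^\ast\rVert^2\ge 2^{-(i-1)}\lVert b_1\rVert^2\ge 2^{-(r-1)}$, using $\lVert b_1\rVert^2=Q(b_1)\ge1$ for an integral form. Hence $\lambda_1(M)\ge c(r)^2\,2^{-(r-1)}=:1/C_0(r)$.

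With this uniform bound I would rerun the proof of Theorem~\ref{th:lifting} essentially verbatim, but with the single threshold $X=X(d,r)$ chosen so that $X>\max\{1,\,3C_0(r)\}$; since the integral basis of $\Q$ is $\{1\}$ (of house $1$) and $m=1$, this $X$ is admissible for every rank-$r$ form at once. The argument then forces the subfield $E\subseteq K$ generated by the algebraic integers of house $<X$ to equal $K$ (otherwise, representing a totally positive shift $k+\alpha$ of a minimal-house generator $\alpha\in\ok\setminus\co_E$ by a vector $v$ with $\house{v}\ge Y\ge X$ contradicts $Y^2\le\house{v}^2\le C\house{k+\alpha}<3C_0(r)\,Y$). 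Thus every admissible $K$ is generated by algebraic integers of degree $\le d$ and house $<X(d,r)$; as there are finitely many such integers and hence finitely many fields they generate, only finitely many $K$ of degree $d$ can occur.

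The main obstacle is exactly the uniform eigenvalue estimate $\lambda_1(M)\ge 1/C_0(r)$: this is what upgrades the fixed-lattice Theorem~\ref{th:lifting} to a statement uniform in the form, by replacing the $Q$-dependent constant of Lemma~\ref{lem:RR} with one depending only on the rank. The remaining points to verify are routine: that a reduced basis is a legitimate lattice basis (it is, being a $\operatorname{GL}_r(\Z)$-transform of the given one, so the convention $\mathfrak n L\subset\co_F$ is preserved), and that the finiteness of algebraic integers of bounded degree and house---already used in the proof of Theorem~\ref{th:lifting}---applies here as well.
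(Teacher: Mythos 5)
Your proof is correct, but it takes a genuinely different route from the paper. The paper's own proof is a two-step reduction to a \emph{single} form: by Conway--Sloane \cite[Theorem 1]{CS1}, there is $m=m(r)$ such that $mQ$ (after possibly replacing $Q$ by $2Q$ to make it classical) is a sum of squares of linear forms with $\Z$-coefficients; hence universality of $Q$ over $K$ forces every element of $m\ok^+$ to be a sum of squares in $\ok$, and Corollary \ref{cor:sums of squares} (itself an application of Theorem \ref{th:lifting} to the one lattice $(\Z^{f(d)},x_1^2+\dots+x_{f(d)}^2)$, using the Pythagoras bound of \cite[Corollary 3.3]{KY1}) finishes the argument. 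You instead attack the uniformity problem head-on: you observe that the only $Q$-dependence in Theorem \ref{th:lifting} is the Rayleigh constant $C=1/\lambda_1(M)$ of Lemma \ref{lem:RR}, and you bound it by $C_0(r)$ uniformly over all rank-$r$ integral forms by passing to a reduced ($\mathrm{GL}_r(\Z)$-equivalent) Gram matrix --- your LLL computation $\lambda_1(M)=\sigma_{\min}(B)^2\ge\sigma_{\min}(U)^2\min_i\lVert b_i^*\rVert^2\ge c(r)^2 2^{-(r-1)}$ is a correct and standard consequence of reduction theory (for Minkowski-reduced $M$ one even has $M\gg_r\operatorname{diag}(M_{11},\dots,M_{rr})$), and universality is indeed preserved under the equivalence, so rerunning the proof of Theorem \ref{th:lifting} with the single threshold $X(d,r)>\max\{1,3C_0(r)\}$ works verbatim. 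What each approach buys: the paper's proof is very short modulo two nontrivial citations (Conway--Sloane and the Pythagoras bound) and ties the corollary to the sums-of-squares result; your proof avoids both citations, is essentially self-contained given reduction theory, and actually establishes something slightly stronger --- a version of Theorem \ref{th:lifting} that is uniform over all $\Z$-forms of fixed rank, with the finite exceptional set of fields depending only on $d$ and $r$.
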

	
	\begin{proof}
		Let $Q$ be a positive definite quadratic form over $\Z$  of rank $r$. By Theorem 1 in \cite{CS1} (possibly first taking $2Q$ to make the form classical), \textit{there exists $m\in\Z$ (depending on $r$) such that $mQ$ is the sum of squares of linear forms with $\Z$-coefficients.}
		Thus, if $Q$ is universal over $K$, then every element of $m\co_K^+$ is the sum of squares. By Corollary \ref{cor:sums of squares} there are only finitely many such fields of given degree $d$.	
	\end{proof}

	\section{Weak Kitaoka's conjecture}
	
	Let us use the results obtained in the previous section to prove our Theorem \ref{thm:2}. However, first we need to recall a (mostly well-known) fact concerning sublattices.

	\begin{lemma}\label{lemma:prep} Let $F$ be a totally real number field.
		Let  $d, r\in\N$ and let $\ell$ be an $\co_F$-lattice of rank $r$. There exists a positive integer $m$ such that for every totally real extension $K\supset F$ of degree $d$ and $\co_K$-lattice $M$ of rank $r$ satisfying $\ell\otimes \co_K\subset M$ we have $mM\subset \ell\otimes \co_K$.
	\end{lemma}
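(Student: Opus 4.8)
The plan is to prove the stronger, \emph{quantitative} statement that the (abelian group) index $[M:\ell\otimes\ok]$ is bounded above by a single constant $B_0$ depending only on $F,\ell,d,r$ but not on $K$. Once this is in hand the lemma follows at once: writing $N\defeq\ell\otimes\ok$, the finite quotient $M/N$ of order $[M:N]$ is annihilated by its order, so $[M:N]\cdot M\subseteq N$; and if $[M:N]\mid m$ then $mM=\bigl(m/[M:N]\bigr)\bigl([M:N]M\bigr)\subseteq N$. Thus it suffices to exhibit a uniform $B_0$ with $[M:N]\mid B_0$, and then take $m=B_0$.

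First I would sandwich $M$ between $N$ and a fixed dual lattice. Let $V=K\cdot M$ be the ambient quadratic space (so $V=K\cdot N$ as well). By our standing convention $\mathfrak n M\subseteq\ok$, hence $\mathfrak s M\subseteq\tfrac12\ok$ and $2B(v,w)\in\ok$ for all $v,w\in M$. Put $N^\sharp=\{v\in V:B(v,w)\in\ok\text{ for all }w\in N\}$. Since $N\subseteq M$, for every $v\in M$ and $w\in N$ we have $2B(v,w)\in\ok$, i.e.\ $2v\in N^\sharp$; therefore
\[
N\ \subseteq\ M\ \subseteq\ \tfrac12 N^\sharp .
\]
By the multiplicativity of indices in a tower, $[M:N]$ divides $[\tfrac12 N^\sharp:N]$, so it is enough to compute the latter and check it is $K$-independent.

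To compute $[\tfrac12 N^\sharp:N]$ I would pass to the associated $\Z$-lattices equipped with the trace-bilinear form $\Tr_{K/\Q}\circ B$, writing $d_\Z(\cdot)$ for the Gram determinant of rank $rd$. For any $\ok$-lattice $L$ of rank $r$ one has the standard formula $d_\Z(L)=\Delta_K^{\,r}\,\Nr_{K/\Q}(\mathfrak v L)$, where $\Delta_K$ is the discriminant of $K$. Using $[\tfrac12 N^\sharp:N]^2=d_\Z(N)/d_\Z(\tfrac12 N^\sharp)$, the scaling identity $d_\Z(\tfrac12 N^\sharp)=4^{-rd}d_\Z(N^\sharp)$, and the duality $\mathfrak v(N^\sharp)=\mathfrak v(N)^{-1}$ (equivalently, a direct computation via the different $\mathfrak d_K$, with $\Nr_{K/\Q}(\mathfrak d_K)=|\Delta_K|$, giving $d_\Z(N^\sharp)=\Delta_K^{\,r}\Nr_{K/\Q}(\mathfrak v N)^{-1}$), the factors $\Delta_K^{\,r}$ cancel and I obtain
\[
[\tfrac12 N^\sharp:N]=2^{rd}\,\Nr_{K/\Q}(\mathfrak v N).
\]
Finally $\mathfrak v N=\mathfrak v(\ell\otimes\ok)=(\mathfrak v\ell)\ok$, so $\Nr_{K/\Q}(\mathfrak v N)=\Nr_{F/\Q}(\mathfrak v\ell)^{[K:F]}$ with $[K:F]=d/[F:\Q]$ fixed; hence $[\tfrac12 N^\sharp:N]=2^{rd}\Nr_{F/\Q}(\mathfrak v\ell)^{d/[F:\Q]}\defeq B_0$ depends only on $F,\ell,d,r$, and $m=B_0$ completes the proof.

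The main obstacle worth flagging is precisely the $K$-uniformity. The naive route is to write $[M:N]^2=\Nr_{K/\Q}(\mathfrak v N/\mathfrak v M)$ and try to bound $\Nr_{K/\Q}(\mathfrak v M)$ from below by Hermite's inequality applied to the trace form; but the only uniform lower bound on the minimum of that form is $\ge d$ (from $\Tr_{K/\Q}(Q(v))\ge d\,\Nr_{K/\Q}(Q(v))^{1/d}\ge d$ for $0\neq v\in M$), which yields only $\Nr_{K/\Q}(\mathfrak v M)\gg \Delta_K^{-r}$ and hence a bound that blows up as $\Delta_K\to\infty$. The dual-lattice sandwich avoids this because the discriminant factor $\Delta_K^{\,r}$ cancels between $d_\Z(N)$ and $d_\Z(N^\sharp)$, leaving the genuinely $K$-independent quantity $\Nr_{K/\Q}(\mathfrak v N)$. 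Consequently the one routine (but essential) point to verify with care is the determinant formula $d_\Z(L)=\Delta_K^{\,r}\Nr_{K/\Q}(\mathfrak v L)$ together with the behaviour of $d_\Z$ under scaling by $\tfrac12$ and under multiplication by $\mathfrak d_K$, which is exactly where the cancellation is made rigorous.
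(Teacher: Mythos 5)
Your proof is correct, and it takes a genuinely different route from the paper's. The paper bounds the index $(M:\ell\otimes\ok)$ via O'Meara's structure theory: by \cite[82:11]{O1} one has $(\mathfrak{v}\ell)\ok=\mathfrak{v}(\ell\otimes\ok)=\mathfrak{a}^2\mathfrak{v}M$ for an integral ideal $\mathfrak{a}$ (the product of invariant factors); since $\mathfrak{n}M\subset\ok$ forces the volume of the scaled lattice $2M$ to be integral (\cite[82:19]{O1}), that volume divides the fixed extended ideal $2^r(\mathfrak{v}\ell)\ok$, and the finiteness theorem \cite[103:4]{O1} (finitely many $\ok$-lattices of given volume and integral scale) then leaves only finitely many possibilities for $M$ and hence for the index; $m$ is a common multiple of these finitely many indices. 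You instead sandwich $M$ between $N=\ell\otimes\ok$ and the half-dual $\tfrac12 N^\sharp$ (using exactly the convention $\mathfrak{s}M\subseteq\tfrac12\ok$) and compute $[\tfrac12 N^\sharp:N]=2^{rd}\Nr_{K/\Q}(\mathfrak{v}N)$ explicitly through trace-form discriminants, so that $(M:N)$ divides this number by multiplicativity of indices in towers. The two arguments share the same skeleton --- bound the group index uniformly in $K$, then annihilate the finite quotient --- but the key mechanism differs, and each buys something. Your version produces an explicit value $m=2^{rd}\Nr_{F/\Q}(\mathfrak{v}\ell)^{d/[F:\Q]}$ and makes the uniformity in $K$ completely transparent: as you correctly flag, the factors $\Delta_K^{r}$ cancel between $d_\Z(N)$ and $d_\Z(N^\sharp)$, which is precisely what defeats the naive Hermite-type estimate. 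The paper's appeal to \cite[103:4]{O1}, a statement about lattices over a single fixed ring $\ok$, leaves that uniformity to be read off from the fact that the candidate indices are norms of ideals dividing a fixed ideal extended to $\ok$; your route avoids this point entirely, at the cost of verifying the standard identities $[L_2:L_1]^2=d_\Z(L_1)/d_\Z(L_2)$, $d_\Z(L)=\Delta_K^{r}\Nr_{K/\Q}(\mathfrak{v}L)$, and $\mathfrak{v}(N^\sharp)=(\mathfrak{v}N)^{-1}$, all of which you invoke correctly (and which can be checked locally, where all lattices are free). Conversely, the paper's argument is shorter if one accepts O'Meara's results as black boxes and needs no discriminant computation.
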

	\begin{proof}
		Let $L_K=\ell\otimes \co_K$.  
		By definition, we have $(\mathfrak{v}\ell)\co_K=\mathfrak{v}L_K$, and \cite[82:11]{O1} gives
		\begin{equation}\label{eq:82:11}
		(\mathfrak{v}\ell)\co_K=\mathfrak{v}L_K=\mathfrak{a}^2\mathfrak{v}M,
		\end{equation} 
		where $\mathfrak{a}$ is an integral ideal (equal to the product of the \textit{invariant factors}  
		of $L_K$ in $M$ \cite[\S 81D]{O1}).  	
		As we assume that all lattices satisfy $\mathfrak{n}M\subset\co_K$, we can use \cite[82:19]{O1} to obtain $\mathfrak v(2M)=2^r(\mathfrak{v}M)\subset \co_K$.
		
		By \eqref{eq:82:11}, the integral ideal $\mathfrak v(2M)$ divides $2^r (\mathfrak{v}\ell)\co_K$, and so there are only finitely many possibilities for it.
		\cite[103:4]{O1} says that \textit{there are only finitely many $\ok$-lattices of given volume and integral scale}. As the $\ok$-lattice $2M$ has integral scale and bounded volume, there are also only finitely many possibilities for $M$.
		
		Thus there are only finitely many possibilities for the (subgroup) index $(M:L_K)$; taking $m$ to be the least common divisor of all the possible indices, we get 
		$mM\subset (M:L_K)M\subset L_K$
		as we wanted.
	\end{proof}

	Our final tool will be the notion of a ``universality criterion set'':
	
	\begin{definition}		
		Let $K$ be a totally real number field. A \textit{universality criterion set in $K$} is a finite set $S_K\subset \ok^+$ such that if an $\ok$-lattice represents all elements of $S_K$, then it is universal.
	\end{definition} 
	
	Note that in the definition, $S_K$ is not required to be minimal or unique.
	As an example, let us mention that by the 290-Theorem by Bhargava and Hanke \cite{BH} we can take  $S_\Q=\{1,\dots,290\}$.
	
	The existence of a universality criterion set in any totally real number field was probably first mentioned by Kim, Kim, and Oh \cite{KKO} (see also \cite[Section 6]{Km}), 
	with the name coming from \cite{EKK}. The existence was recently proved by Chan and Oh \cite[Corollary 5.8]{CO}.
	
	For a matrix $M$, let $K(M)$ be the number field extension of $K$ generated by all the entries of $M$.
	
	\begin{theorem}\label{th:kitaoka}
		For each $d\in\N$, there are only finitely many totally real number fields $K$ of degree $d=[K:\Q]$ over which there is a ternary universal $\ok$-lattice.
	\end{theorem}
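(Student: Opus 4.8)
The plan is to reduce the statement to the lifting theorem (Theorem~\ref{th:lifting}). Concretely, I would show that any ternary universal $\ok$-lattice $M$ contains the $\co_K$-extension $\ell\otimes\co_K$ of a ternary lattice $\ell$ defined over a proper (or at least controlled) subfield $E\subseteq K$, and then use Lemma~\ref{lemma:prep} to turn the universality of $M$ into a statement that $\ell\otimes\co_K$ represents $m^2\ok^+$ for a suitable $m$. Once the relevant base pair $(E,\ell)$ is fixed, Theorem~\ref{th:lifting} immediately bounds the number of admissible $K$, and the whole theorem follows provided the pairs $(E,\ell)$ range over a finite set.

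The first step is to manufacture a universal \emph{sublattice} of controlled shape. Since $M$ is universal it represents every element of a $290$-set $S_K$; picking vectors $v_s\in M$ with $Q(v_s)=s$ for $s\in S_K$ and setting $N=\sum_s\co_K v_s$, the sublattice $N\subseteq M$ still represents all of $S_K$ and is therefore universal by the defining property of a $290$-set. As no universal lattice has rank below three and $N\subseteq M$ has rank at most three, $N$ has rank exactly three. Let $E\subseteq K$ be the subfield generated over $\Q$ by the entries of a Gram matrix of $N$; possibly after accounting for coefficient ideals, $N$ is then the extension $\ell\otimes\co_K$ of a ternary $\co_E$-lattice $\ell$. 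Here the Cauchy--Schwarz inequality $Q(v_s)Q(v_{s'})\succeq B(v_s,v_{s'})^2$ controls the off-diagonal Gram entries $B(v_s,v_{s'})$ in terms of the $s,s'$.

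With such a base pair $(E,\ell)$ in hand, the conclusion is formal. Applying Lemma~\ref{lemma:prep} with base field $E$, rank $r=3$, and degree $d$ produces a positive integer $m$, depending only on $(E,\ell)$ and $d$, with $mM\subseteq \ell\otimes\co_K=N$. Because $M$ is universal, every $\alpha\in\ok^+$ can be written as $\alpha=Q(v)$ with $v\in M$; then $mv\in\ell\otimes\co_K$ and $Q(mv)=m^2\alpha$, so $\ell\otimes\co_K$ represents all of $m^2\ok^+$. Theorem~\ref{th:lifting}, applied to the fixed $\co_E$-lattice $\ell$ with modulus $m^2$, then shows that only finitely many totally real fields $K\supseteq E$ of degree $d$ can arise for this particular $(E,\ell)$.

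The main obstacle is precisely the finiteness of the set of base pairs $(E,\ell)$, which is needed so that the ``finitely many $K$'' conclusion survives summation over $(E,\ell)$. This is delicate because the $290$-set elements $s$, hence the Gram entries of $N$ and the field $E$ they generate, are not a priori bounded uniformly in $K$; one cannot simply force $E=\Q$, nor can one expect the successive minima of a universal ternary lattice to be uniformly small (this is, after all, the essential content of Kitaoka's conjecture). I would handle this by induction on the degree $d$, exploiting that $[E:\Q]$ divides $d$. When $E\subsetneq K$ the degree of $E$ is a proper divisor of $d$, and one descends from $K$ to $E$ via a trace/transfer argument to invoke the inductive hypothesis and so limit the fields $E$ that can occur; the boundary case $E=K$ must instead be pinned down directly from the house bounds $\house{v}_K^2\le C\house{Q(v)}_K$ of Lemma~\ref{lem:RR} together with the Cauchy--Schwarz estimates above. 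Making this dichotomy precise, and in particular proving that only finitely many subfields $E$ genuinely appear, is where the real work of the proof lies.
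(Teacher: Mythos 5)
Your endgame coincides with the paper's: fix a pair $(F,\ell)$ with $\ell$ ternary universal over a proper subfield $F\subsetneq K$ and $\ell\otimes\co_K\subset L$, obtain $m$ from Lemma \ref{lemma:prep} so that $mL\subset\ell\otimes\co_K$, conclude that $\ell\otimes\co_K$ represents $m^2\ok^+$, and invoke Theorem \ref{th:lifting}; the paper additionally quotes \cite[Corollary 1]{Ea} so that each fixed $F$ contributes only finitely many lattices $\ell$. But the step you explicitly defer---finiteness of the base pairs, and indeed the mere existence of a universal $\ell$ defined over a \emph{controlled} subfield---is the actual content of the theorem, and your sketch for it breaks down at the root: you take representing vectors for the $290$-set $S_K$ of $K$ itself. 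The elements of $S_K$ are completely uncontrolled as $K$ varies, so the Cauchy--Schwarz bounds $B(v_s,v_{s'})^2\preceq Q(v_s)Q(v_{s'})=ss'$ say nothing uniform about the Gram entries, and the field $E$ they generate will typically be all of $K$. Both of your proposed repairs founder on this. In the case $E=K$, Lemma \ref{lem:RR} together with Cauchy--Schwarz cannot pin $K$ down, because every bound in sight involves the unbounded elements of $S_K$. In the case $E\subsetneq K$, an inductive hypothesis about fields of smaller degree admitting ternary universal lattices is of no use unless you know that $E$ \emph{admits} one; but your $\ell$ only represents $S_K$, and representing the $290$-set of $K$ does not make an $\co_E$-lattice universal over $E$---the $290$-property of $S_K$ is a statement about $\ok$-lattices, not about $\co_E$-lattices.

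The idea you are missing is to anchor the $290$-set in a \emph{fixed smaller field} and iterate. Start with $k=\Q$ (so one may take $S_k=\{1,\dots,290\}$) and pick $v_s\in L$ with $Q(v_s)=s$ for $s\in S_k$. Now Cauchy--Schwarz gives $(2B(v_s,v_t))^2\preceq 4st$ with $s,t$ lying in the fixed field $k$, whence $\Tr_{K/\Q}\bigl((2B(v_s,v_t))^2\bigr)\le 4d\max_{s,t\in S_k}\Tr_{k/\Q}(st)$ is bounded by a constant depending only on $k$ and $d$. A totally real algebraic integer of degree $\le d$ whose square has bounded trace has bounded house, so there are only finitely many possible Gram matrices $M=(B(v_s,v_t))$ and finitely many possible fields $k(M)$, \emph{uniformly in $K$}. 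This forces the trichotomy that drives the paper's proof: either $k(M)=K$, which confines $K$ to a finite list; or $k(M)=k$, in which case the $\co_k$-span of the $v_s$ is an $\co_k$-lattice representing $S_k$, hence universal over $k$ by the $290$-property and necessarily ternary---this is how one manufactures the pairs $(F,\ell)$ with $\ell\otimes\co_K\subset L$ that your endgame needs; or $k\subsetneq k(M)\subsetneq K$, in which case one repeats the argument with each of the finitely many possible fields $k(M)$ as a new base field. Since degrees strictly increase along a chain of subfields of $K$, this iteration terminates after at most $d$ rounds, and every set of fields produced along the way is finite. Without this device---uniform bounds obtained by drawing the represented elements from a fixed subfield, plus the finite iteration through intermediate Gram fields---your proposal reduces the theorem to its hardest part without resolving it.
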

	
	\begin{proof} Most of the proof will consist of 
		inductively defining finite sets $\mathcal A_i, \mathcal F_i, \mathcal H_i$ (throughout the proof, we view also the empty set as finite) 
		of number fields of degree $\leq d$ 
		with the following property: 
		
		\textit{If $K$ is a number field of degree $\leq d$ 
			that admits a ternary universal $\ok$-lattice $L$, then 
			\begin{enumerate}
				\item $K\in\mathcal A_i$, or
				\item there is $F\in\mathcal H_i, F\subsetneq K$ with a ternary universal $\co_F$-lattice $\ell$ such that $\ell\otimes\co_K\subset L$, or
				\item there are $k_j\in \mathcal F_j, j=0, \dots, i,$ such that $k_0\subsetneq k_1\subsetneq\dots\subsetneq k_i\subsetneq K$.
		\end{enumerate}	}	
		
		Let us denote the set of number fields $K$ satisfying item \textit{(2)} (item \textit{(3)}, resp.) by $\mathcal B_i$ ($\mathcal C_i$, resp.). We do not claim (yet) that $\mathcal B_i$ or $\mathcal C_i$ is necessarily finite.
		
		\medskip
		
		Let us start the construction with		
		$\mathcal A_0=\mathcal H_0=\emptyset$ and $\mathcal F_0=\{\Q\}$. Note that if $K$ is a number field with a ternary universal $\ok$-lattice, then $K\neq\Q$, and so $K$ satisfies item \textit{(3)} for $i=0$, as $\Q=k_0\subsetneq K$.

		Let further $K\not\in\mathcal A_i$ admit a ternary universal $\ok$-lattice $L$.
		We do not need to consider fields $K\in\mathcal B_i$ (i.e., satisfying item \textit{(2)} above), for we will have $\mathcal H_{i+1}\supset\mathcal H_i$ by our construction, and so $\mathcal B_i\subset\mathcal B_{i+1}$.
		
		Thus consider $K\in\mathcal C_i$ and accordingly take some $k=k_i\in\mathcal F_i$ such that $k\subsetneq K$.
		Let $S_k$ be the corresponding universality criterion set. The universal $\co_K$-lattice $L$ represents all elements $s\in S_k$, i.e., there are $v_s\in L$ such that $Q(v_s)=s$. Let $M=(B(v_s,v_t))$ be the corresponding Gram matrix. 
		Further let $B(v_s,v_t)=m_{st}/2$ for $s\neq t$ and $m_{ss}=s\in S_k$ so that $m_{st}\in\ok$ for all $s,t$.
		
		As $L$ is totally positive definite, 
		for each $s,t\in S_k$ we have the Cauchy--Schwarz inequality $m_{st}^2\preceq 4st$, 
		and so $\Tr_{K/\Q}(m_{st}^2)\leq \max_{s,t\in S_k} 4\Tr_{K/\Q}(st)\leq \max_{s,t\in S_k} 4d\Tr_{k/\Q}(st)$ is bounded. Since $m_{st}$ is an algebraic integer of degree $\leq d$, there are only finitely many possibilities for it (that do not depend on $K$). Thus there are also only finitely many possibilities for the field $k(M)\subset K$.
		
		\medskip
		
		Let us next distinguish three cases according to whether we have equality in any of the inclusions $k\subset k(M)\subset K$:
		
		a) To deal with the case $k(M)=K$, 
		\textit{let $\mathcal A_{i+1}$ be the union of $\mathcal A_i$ with all these fields $k(M)$ as $k$ runs through $\mathcal F_i$.}
		Therefore if moreover $K\not\in\mathcal A_{i+1}$, then $k(M)$ is a proper subfield of $K$.
		
		b) 
		Assume now that $k(M)=k$ and consider the $\co_k$-span $\ell$ of all vectors $v_s$ for $s\in S_k$ (as a subset of $L$). Then $\ell$ (equipped with the restrictions of $B,Q$) is an $\co_k$-lattice (for all entries of the Gram matrix $M$ lie in $k$) that represents  
		all elements of $S_k$. By the assumption that $S_k$ is a universality criterion set for $k$, the $\co_{k}$-lattice $\ell$ is {universal} over $k$.
		
		Then $\ell\otimes \ok$ is a sublattice of the ternary lattice $L$, and so the rank of $\ell$ is also 3 (no universal lattice has rank $\leq 2$, and rank of $\ell\leq$ rank of $L=3$).
		Thus we can 
		\textit{let $\mathcal H_{i+1}$ be the union of $\mathcal H_i$ with all these fields $k(M)=k$ as $k$ runs through $\mathcal F_i$.}
		
		c)
		Finally, we are left with the case $k\subsetneq k(M)\subsetneq K$. Then \textit{let $\mathcal F_{i+1}$ be the set of these fields $k_{i+1}=k(M)$}, i.e.,  $k\subsetneq k_{i+1}=k(M)\subsetneq K$.
		
		\medskip
		
		Thus we have defined all the needed sets for $i+1$.		
		As there were only finitely many possibilities for $k(M)$ (which did not depend on $K$ but only on $k\in\mathcal F_i$), each of the new sets $\mathcal A_{i+1}$, $\mathcal F_{i+1}$, $\mathcal H_{i+1}$ is still finite, as we wanted.
		
		\medskip
		
		Having constructed the desired sets $\mathcal A_i, \mathcal F_i, \mathcal H_i$, let us now consider them when $i=d$.
		Note that no field $K$ satisfies \textit{(3)} for $i=d$ (i.e., $\mathcal C_d=\emptyset$), for otherwise we would have 
		$d=[K:\Q]\geq [k_d:\Q]+1\geq [k_{d-1}:\Q]+2\geq\dots\geq [k_0:\Q]+d+1$, which is impossible.
		
		There are finitely many fields $K$ in the finite set $\mathcal A_d$, and so it remains to consider fields $K\in\mathcal B_d$. 
		We will prove that there are also finitely many of them.

		The set $\mathcal H_d$ is finite, and by Corollary 1 in \cite{Ea}, there are only finitely many ternary universal lattices over a fixed field $F\in\mathcal H_d$. Thus there are finitely many pairs $(F, \ell)$ that can appear in item~\textit{(2)}. 
		
		By Lemma \ref{lemma:prep} there is $m\in\N$ depending on $(F,\ell)$ and $d$, but not on the specific field $K$ or lattice $L$, such that if $\ell\otimes \ok\subset L$ (which is true by \textit{(2)}), then $mL\subset \ell\otimes \ok$. 
		
		As $L$ is universal, $\ell\otimes \ok$ represents all elements of $m\ok^+$. But by Theorem \ref{th:lifting}, there are only finitely many such fields $K$ (and for each of them, there are again at most finitely many ternary universal lattices $L$). Thus, for a fixed pair $(F,\ell)$, there are at most finitely many extensions $(K,L)$ (of given degree $d$). Thus also item \textit{(2)} gives only finitely many fields $K$, concluding the proof.
	\end{proof}
	
	Finally, note that essentially the same proof yields the following corollary for fields of odd degree (over which there never exists a ternary universal lattice). For \textit{classical} lattices, this result was proved by Kim \cite{Ki3}.
	
	\begin{cor}\label{cor:kitaoka}
		For each \emph{odd} $d\in\N$, there are only finitely many totally real number fields $K$ of degree $d=[K:\Q]$ over which there is a quaternary universal $\ok$-lattice.
	\end{cor}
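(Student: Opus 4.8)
The plan is to run the proof of Theorem~\ref{th:kitaoka} almost verbatim, replacing \emph{ternary} by \emph{quaternary} and rank~$3$ by rank~$4$ throughout, and to exploit the oddness of $d$ at the single point where the ternary argument would otherwise break down. The first, elementary observation I would record is that every subfield $k$ of a field $K$ of odd degree $d$ again has odd degree, because $[k:\Q]$ divides $[K:\Q]=d$. Hence every field occurring in the inductively built sets $\mathcal A_i,\mathcal F_i,\mathcal H_i$ has odd degree, and over such a field there is \emph{no} universal lattice of rank $\leq 3$: no lattice of rank $\leq 2$ is ever universal, and a ternary universal lattice forces even degree (this is the theorem of \cite{EK} recalled in the introduction).

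With this in hand I would reproduce the inductive construction of $\mathcal A_i,\mathcal F_i,\mathcal H_i$ word for word, now starting from a field $K$ of odd degree $d$ carrying a quaternary universal $\ok$-lattice $L$. The $290$-set mechanism, the Cauchy--Schwarz bound $m_{st}^2\preceq 4st$ on the Gram entries, the ensuing finiteness of the candidates for the field $k(M)$, and the split into the three cases $k(M)=K$ (feeding $\mathcal A_{i+1}$), $k(M)=k$ (feeding $\mathcal H_{i+1}$), and $k\subsetneq k(M)\subsetneq K$ (feeding $\mathcal F_{i+1}$) all carry over unchanged, since none of these steps uses the rank of $L$ beyond ``$\leq 4$''. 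The terminal chain-length argument is also identical: a strictly increasing tower of subfields of $K$ has at most $d$ proper steps, so $\mathcal C_d=\emptyset$ and only $\mathcal A_d$ (finite) and $\mathcal B_d$ survive.

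The one genuinely new use of oddness is in the case $k(M)=k$. There the $\co_k$-span $\ell$ of the representing vectors is universal over $k$ (by the $290$-set property) and satisfies $\ell\otimes\ok\subseteq L$, so its rank is at most $4$; but since $k$ has odd degree its rank cannot be $\leq 3$, and therefore equals exactly $4$. Thus each $F\in\mathcal H_d$ carries a quaternary universal $\co_F$-lattice, and $\mathcal H_d$ is finite. To finish as in Theorem~\ref{th:kitaoka}, I would reduce to finitely many pairs $(F,\ell)$, apply Lemma~\ref{lemma:prep} to obtain a fixed $m$ with $mL\subset\ell\otimes\ok$, and conclude from Theorem~\ref{th:lifting} that $\ell\otimes\ok$ represents all of $m\ok^+$ for only finitely many $K$.

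The step I expect to be the main obstacle is precisely this reduction to finitely many pairs $(F,\ell)$, which requires that over each fixed $F\in\mathcal H_d$ there are only finitely many \emph{quaternary} universal lattices, whereas \cite[Corollary~1]{Ea} is stated only in the ternary case. I would resolve it either by checking that Earnest's finiteness result holds verbatim for any fixed rank, or, failing a ready citation, by noting that the mechanism above already supplies it over a \emph{fixed} field: a quaternary universal lattice over $F$ represents the fixed $290$-set $S_F$, the Cauchy--Schwarz bound confines its Gram data to finitely many possibilities and hence bounds its volume, and \cite[103:4]{O1} then leaves only finitely many isometry classes.
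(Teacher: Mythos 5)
Your proposal is correct and follows essentially the same route as the paper: the published proof likewise runs the argument of Theorem~\ref{th:kitaoka} verbatim with ``ternary'' replaced by ``quaternary'', and uses the oddness of the degree (via \cite[Lemma 3]{EK}) exactly where you do, to force the sublattice $\ell$ arising in case b) to have rank $4$. The single obstacle you flag --- finiteness of quaternary universal lattices over a fixed field --- is resolved in the paper simply by citing \cite[Theorem 1]{EK} in place of \cite[Corollary 1]{Ea}; your fallback argument via the fixed $290$-set $S_F$, the Cauchy--Schwarz bound on the Gram entries, and \cite[103:4]{O1} would also suffice.
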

	
	\begin{proof}
		The proof is almost verbatim the same as the proof of Theorem \ref{th:kitaoka}, replacing ``ternary'' by ``quaternary'' everywhere in the proof (and adding the requirement that all number fields considered have odd degree).
		As $K$ is assumed to have odd degree, all its subfields also have odd degree, and so do not admit a ternary universal lattice \cite[Lemma~3]{EK}. Thus the lattice $\ell$ considered in part b) of the preceding proof must have rank 4.	
		Finally, towards the end of the proof one uses \cite[Theorem~1]{EK} instead of \cite[Corollary~1]{Ea}.
	\end{proof}


\begin{thebibliography}{BHM}
		
		
		\bibitem[BC+]{BC+} C. N. Beli, W. K. Chan, M. I. Icaza, J. Liu, \emph{On a Waring's problem for integral quadratic and Hermitian forms}, Trans.	Amer. Math. Soc. \textbf{371} (2019), 5505--5527
		
		
		\bibitem[BH]{BH} M. Bhargava, J. Hanke, \emph{Universal quadratic forms and the 290-theorem}, preprint
		
		\bibitem[BK]{BK2} V. Blomer, V. Kala, \emph{On the rank of universal quadratic forms over real quadratic fields}, Doc. Math. \textbf{23} (2018), 15--34
		
		\bibitem[Br]{Br} H. Brunotte, \emph{Zur Zerlegung totalpositiver Zahlen in Ordnungen totalreeller algebraischer Zahlk\" orper},  Arch. Math. (Basel) 41 (1983), 502--503
		
		\bibitem[Ca]{Ca} J. W. S. Cassels, \emph{An introduction to the geometry of numbers}, Springer-Verlag, Berlin, 1997
		
		
		\bibitem[CKR]{CKR} W. K. Chan, M.-H. Kim, S. Raghavan, \emph{Ternary universal integral quadratic forms}, Japan. J. Math. \textbf{22} (1996), 263--273
		
		\bibitem[CO]{CO} W. K. Chan, B.-K. Oh,   \emph{Can we recover an integral quadratic form by representing all its subforms?}, \href{https://doi.org/10.48550/arxiv.2201.08957}{arxiv:2201.08957}
  
		
		\bibitem[CS]{CS1} J. H. Conway, N. J. A. Sloane, \emph{Low-dimensional lattices. {V}. {I}ntegral coordinates for integral lattices}, Proc. Roy. Soc. London Ser. A \textbf{426} (1989), 211--232
		
		
		\bibitem[DS]{DS} A. Dress, R. Scharlau, \emph{Indecomposable totally positive numbers in real quadratic orders}, J. Number Theory \textbf{14} (1982), 292--306
		
		\bibitem[Ea]{Ea} A. G. Earnest, \emph{Universal and regular positive quadratic lattices over totally real number fields}, in Integral quadratic forms and lattices (Seoul, 1998), Contemp. Math. \textbf{249} (1999), Amer. Math. Soc., 17--27
		
		\bibitem[EK]{EK} A. G. Earnest, A. Khosravani, \emph{Universal positive quaternary quadratic lattices over totally real number fields}, Mathematika \textbf{44} (1997), 342--347

        \bibitem[EKK]{EKK} N. D. Elkies, D. M. Kane, S. D. Kominers, \emph{Minimal {$\mathcal{S}$}-universality criteria may vary in size}, J. Th\'{e}or. Nombres Bordeaux \textbf{25} (2013), 557--563
        
		
		\bibitem[HJ]{HJ} R. A. Horn, C. R. Johnson, \emph{Matrix analysis}, Cambridge University Press, Cambridge, 2013
		
		
		\bibitem[Ka]{Ka} V. Kala, \emph{Universal quadratic forms and elements of small norm in real quadratic fields}, Bull. Aust. Math. Soc. \textbf{94} (2016), 7--14
		
		
		\bibitem[KT]{KT} V. Kala, M. Tinkov\' a, \emph{Universal Quadratic Forms, Small Norms, and Traces in Families of Number Fields}, Int. Math. Res. Not. IMRN (to appear),  \href{https://doi.org/10.1093/imrn/rnac073}{doi:10.1093/imrn/rnac073}
		
		\bibitem[KY]{KY1} V. Kala, P. Yatsyna, \emph{Lifting problem for universal quadratic forms}, Adv. Math., \textbf{26}, (2020), 107497, 24 pp.
		
		\bibitem[Ki1]{Ki2} B. M. Kim, \emph{Universal octonary diagonal forms over some real quadratic fields}, Comment. Math. Helv. \textbf{75} (2000), 410--414
		
		\bibitem[Ki2]{Ki3} B. M. Kim, \emph{Positive Universal Forms over Totally Real Fields}, unpublished manuscript (2003)
		
		\bibitem[Km]{Km} M.-H. Kim, \emph{Recent developments on universal forms. In Algebraic and arithmetic theory of quadratic forms}, In Algebraic and arithmetic theory of quadratic forms, Contemp. Math. \textbf{344} (2004). Amer. Math. Soc., 215--228
		
		\bibitem[KKO]{KKO}	B. M. Kim, M.-H. Kim, B.-K. Oh, \textit{A finiteness theorem for representability of quadratic forms by forms}, J. Reine Angew. Math. \textbf{581} (2005), 23--30
		
		\bibitem[KKP]{KKP}	B. M. Kim, M.-H. Kim, D. Park, \textit{Real quadratic fields admitting universal lattices of rank 7}, J. Number Theory \textbf{233} (2022), 456--466
		
		
		\bibitem[KRS]{KRS} J. Kr\' asensk\' y, M. Ra\v{s}ka, E. Sgallov\'a, \emph{Pythagoras number in biquadratic fields},  Expo. Math. (to appear), \href{https://doi.org/10.1016/j.exmath.2022.06.002}{doi:10.1016/j.exmath.2022.06.002}
		
		
		\bibitem[Le]{Le} D. B. Leep, \emph{A historical view of the {P}ythagoras numbers of fields}, in Quadratic forms---algebra, arithmetic, and geometry,  Contemp. Math. \textbf{493} (2009), Amer. Math. Soc., 271--288
		
		
		
		\bibitem[Nar]{Na} W. Narkiewicz, \emph{Elementary and analytic theory of algebraic numbers}, 3rd Edition, Springer-Verlag, Berlin, 2004
		
		
		\bibitem[OM]{O1} O. T. O'Meara, \emph{Introduction to Quadratic Forms}, Springer-Verlag, Berlin, 1973
		
		
		\bibitem[Pe1]{Pe} M. Peters, \emph{Quadratische {F}ormen \"{u}ber {Z}ahlringen}, Acta Arith. \textbf{24} (1973), 157--165 
		
		
		\bibitem[Pe2]{Pe2} M. Peters, \emph{Summen von Quadraten in {Z}ahlringen}, J, reine angew. Math. \textbf{268} (1974), 318--323
		
		
		\bibitem[Sc]{Sc2} R. Scharlau, \emph{On the {P}ythagoras number of orders in totally real number	fields}, J. Reine Angew. Math. \textbf{316} (1980), 208--210
		
		\bibitem[Si1]{Si2} C. L. Siegel, \emph{Darstellung total positiver Zahlen durch Quadrate},  Math. Z. \textbf{11} (1921),  246--275 
		
		
		
		\bibitem[Si2]{Si3} C. L. Siegel, \emph{Sums of $m$-th powers of algebraic integers},  Ann. of Math. \textbf{46} (1945),  313--339 
		
		
		\bibitem[Ti]{Ti} M. Tinkov\' a, \textit{On the Pythagoras number of the simplest cubic fields}, \href{https://arxiv.org/abs/2101.11384}{arxiv:2101.11384}
		
		\bibitem[TV]{TV} M. Tinkov\' a,  P. Voutier, \textit{Indecomposable integers in real quadratic fields}, J. Number Theory \textbf{212} (2020), 458--482
		
		\bibitem[XZ]{XZ} F. Xu, Y. Zhang, \emph{On indefinite and potentially universal quadratic forms over number fields}, Trans. Amer. Math. Soc. (to appear), \href{https://arxiv.org/abs/2004.02090}{arxiv:2004.02090}
		
		
		\bibitem[Ya]{Ya} P. Yatsyna, \emph{A lower bound for the rank of a universal quadratic form with integer coefficients in a totally real field}, Comment. Math. Helvet. \textbf{94} 2019, 221--239
		
		
	\end{thebibliography}
\end{document}